\newcommand{\FF}{{\mathbb F}}
\newcommand{\cE}{{\mathcal E}}
\newcommand{\bP}{{\bf P}}
\newcommand{\bG}{{\bf G}}
\newcommand{\bS}{{\bf S}}
\newcommand{\bT}{{\bf T}}
\newcommand{\bB}{{\bf B}}
\newcommand{\bL}{{\bf L}}
\newcommand{\bN}{{\bf N}}
\newcommand{\Nr}{\mathrm{N}}
\newcommand{\Ind}{\mathrm{Ind}}
\newtheorem{thm}{Theorem}[section]
\newtheorem{lemma}{Lemma}[section]
\def\adots{\mathinner{\mkern2mu\raise0pt\hbox{.}  
\mkern2mu\raise4pt\hbox{.}\mkern1mu
\raise7pt\vbox{\kern7pt\hbox{.}}\mkern1mu}}
\numberwithin{equation}{section}
\begin{document}

\bibliographystyle{ieeetr}

\title[Jordan decomposition and real-valued characters]
{Jordan decomposition and real-valued characters of finite reductive groups with connected center}
\author{Bhama Srinivasan}
  \address{Department of Mathematics, Statistics, and Computer Science (MC 249)\\
           University of Illinois at Chicago\\
           851 South Morgan Street\\
           Chicago, IL  60680-7045}
  \email{srinivas@uic.edu}
\author{C. Ryan Vinroot}
  \address{Department of Mathematics\\
           College of William and Mary\\
           P. O. Box 8795\\
           Williamsburg, VA  23187-8795}
   \email{vinroot@math.wm.edu}

\begin{abstract}  Let $\bG$ be a connected reductive group with connected center defined over $\FF_q$, with Frobenius morphism $F$.  We parameterize all of the real-valued irreducible complex characters of $\bG^F$ using the Jordan decomposition of characters.\\
\\
2010 {\it AMS Subject Classification:}  20C33
\end{abstract}

\maketitle

\section{Introduction} \label{Intro}

Let $\bG$ be a connected reductive group with connected center defined over $\FF_q$, and $F$ a Frobenius morphism of $\bG$.  Our main result is a parametrization of the real-valued irreducible complex characters of the finite group $\bG^F$ in terms of the Jordan decomposition of characters.  We hope that the parametrization of the real-valued characters of finite reductive groups will be very useful in the real-valued character theory of finite groups in general.  For example, the results of Guralnick, Navarro, and Tiep \cite{GuNaTi11} on the sizes of real classes and degrees of real characters depends on the examination of real characters of many finite reductive groups.

After establishing notation and preliminaries in Section \ref{Prelim}, we discuss the Jordan decomposition of characters in Section \ref{Jord}.  In particular, we state a result of Digne and Michel in Theorem \ref{UniqueJord}, which states that when the center of $\bG$ is connected, the Jordan decomposition map may be uniquely described by a certain list of properties.  We use this result in a crucial way in the proof of our main result, Theorem \ref{MainThm}.  That is, given the image of some character $\chi$ under the Jordan decomposition map, then we find the image of $\bar{\chi}$ under the map, by showing the proposed image satisfies the list of properties from the result of Digne and Michel.  We may state our main result as follows.  Let $\chi$ be an irreducible complex character of $\bG^F$, and let $\chi$ correspond to the pair $(s_0, \nu)$ via the Jordan decomposition, where $s_0$ is a semisimple element of the dual group $\bG^{*F^*}$, and $\nu$ is a unipotent character of $C_{\bG^*}(s_0)^{F^*}$.  Then $\chi$ is real-valued if and only if the pair $(s_0, \nu)$ is $\bG^{*F^*}$-conjugate to the pair $(s_0^{-1}, \bar{\nu})$.\\
\\
\noindent {\bf Acknowledgements. }  The second-named author was supported in part by a grant from the Simons Foundation.

\section{Preliminaries} \label{Prelim}

Throughout this section, we let $\bG$ be a connected reductive group defined over a finite field $\FF_q$, and $F: \bG \rightarrow \bG$ the corresponding Frobenius morphism.  For any $F$-stable subgroup $\bG_1$ of $\bG$, we let $\bG_1^F$ denote the group of $F$-fixed elements of $\bG_1$.  For an element $g \in \bG$, we denote ${^g \bG_1} = g \bG_1 g^{-1}$.  We begin with the following. 

\begin{lemma} \label{Norm} Let $\bG_1$ be a connected reductive $F$-stable subgroup of $\bG$, and $\bS$ an $F$-stable maximal torus of $\bG_1$ and $\bG$.  Let $\bN_1 = \Nr_{\bG}(\bG_1)$ and $\bN = \Nr_{\bG}(\bS)$.  Then $\bN_1 = \bG_1 (\bN \cap \bN_1)$.  

If $\bS \subset \bB$ where $\bB$ is an $F$-stable Borel subgroup of $\bG_1$, then $\bN_1^{F} = \bG_1^{F} (\bN^{F} \cap \bN_1^{F})$.
\end{lemma}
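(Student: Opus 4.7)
The plan is to mimic the first assertion at the level of $F$-fixed points, using the hypothesis that $\bS \subset \bB$ forces $\bS$ (and its conjugate by $n$) to be a maximally split $F$-stable maximal torus of $\bG_1$.

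For the first assertion, let $n \in \bN_1$. Since $n$ normalizes $\bG_1$, conjugation by $n$ is an automorphism of $\bG_1$, and so $n\bS n^{-1}$ is a maximal torus of $\bG_1$. By the conjugacy of maximal tori in the connected reductive group $\bG_1$, there exists $g \in \bG_1$ with $g \cdot n\bS n^{-1} \cdot g^{-1} = \bS$, i.e.\ $gn \in \bN$. Since $g \in \bG_1 \subseteq \bN_1$ and $n \in \bN_1$, we also have $gn \in \bN_1$, so $gn \in \bN \cap \bN_1$. Then $n = g^{-1}(gn) \in \bG_1 (\bN \cap \bN_1)$. The reverse containment is immediate.

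For the $F$-fixed version, let $n \in \bN_1^F$ and set $\bT = n\bS n^{-1}$. Because $F(n) = n$ and $\bS$ is $F$-stable, $\bT$ is an $F$-stable maximal torus of $\bG_1$. The key observation is that $n\bB n^{-1}$ is again a Borel subgroup of $\bG_1$ (since $n$ normalizes $\bG_1$), and it is $F$-stable (since $F(n) = n$ and $\bB$ is $F$-stable). Hence both pairs $(\bS, \bB)$ and $(\bT, n\bB n^{-1})$ consist of an $F$-stable maximal torus contained in an $F$-stable Borel of $\bG_1$. By a standard application of the Lang--Steinberg theorem -- first to the connected group $\bG_1$ acting on the variety of Borel subgroups, and then to the (connected) unipotent radical of a fixed $F$-stable Borel of $\bG_1$ acting on its maximal tori -- any two such pairs are $\bG_1^F$-conjugate. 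Therefore there exists $g \in \bG_1^F$ with $g\bS g^{-1} = \bT = n\bS n^{-1}$. Then $g^{-1}n \in \bN$, and since both $g^{-1}$ and $n$ are $F$-fixed and lie in $\bN_1$, we have $g^{-1}n \in \bN^F \cap \bN_1^F$. Thus $n = g(g^{-1}n) \in \bG_1^F (\bN^F \cap \bN_1^F)$.

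The main obstacle is the $\bG_1^F$-conjugacy step in the second part. Without the $F$-stable Borel hypothesis, $\bT$ and $\bS$ would only be classified up to $\bG_1^F$-conjugacy by $F$-conjugacy classes in the Weyl group $N_{\bG_1}(\bS)/\bS$, and these classes need not coincide; conjugacy in $\bG_1$ alone does not upgrade to conjugacy in $\bG_1^F$. The Borel hypothesis is precisely what pins both tori to the $F$-class of the identity, making the argument go through. Once this step is available, the rest is a direct translation of the first assertion's proof to the $F$-fixed setting.
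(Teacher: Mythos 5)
Your proof is correct and follows essentially the same route as the paper: conjugate $\bS$ back by an element of $\bG_1$ (via conjugacy of maximal tori) for the first claim, and conjugate the pair $(\bS,\bB)$ back by an element of $\bG_1^F$ (via the Lang--Steinberg theorem applied to pairs of $F$-stable torus and Borel) for the second. You spell out the Lang--Steinberg step and the role of the Borel hypothesis in somewhat more detail than the paper does, but the decomposition and the key tools are identical.
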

\begin{proof} Let $g \in \bN_1$.  Since ${^g \bG_1} = \bG_1$ and $\bS \subseteq \bG_1$, we have ${^g \bS} \subseteq \bG_1$.  Any two maximal tori of $\bG_1$ are conjugate, and so ${^g \bS} = {^x \bS}$ for some $x \in \bG_1$.  Then $x^{-1} g = w \in \bN$, and $g = xw$, where $x \in \bG_1$ and $w \in \bN \cap \bN_1$, which gives the first statement.

Now suppose $g \in \bN_1^{F}$.  Then ${^g \bS}$ and ${^g \bB}$ are $F$-stable.  By the Lang-Steinberg Theorem, the pairs $(\bS, \bB)$ and $({^g \bS}, {^g \bB})$ are conjugate by some $x \in \bG^{F}$.  Then ${^g \bS} = {^x \bS}$, and $x^{-1} g \in \bN^{F}$, so $g = xw$ with $x \in \bG_1^{F}$ and $w \in \bN^{F} \cap \bN_1^{F}$, giving the second result.
\end{proof}

Fix a pair $(\bT, \bB)$, where $\bT$ is a maximally split $F$-stable torus of $\bG$, contained in an $F$-stable Borel subgroup $\bB$ of $\bG$.  Through the root system associated with $\bT$, we define the dual reductive group $\bG^*$ with dual Frobenius morphism $F^*$, and with $F^*$-stable maximal torus $\bT^*$ dual to $\bT$, contained in the $F^*$-stable Borel $\bB^*$ of $\bG^*$.  Define the Weyl group $W = \Nr_{\bG}(\bT)/\bT$, and the dual Weyl group $W^* = \Nr_{\bG^*}(\bT^*)/\bT^*$.  There is a natural isomorphism $\delta: W \rightarrow W^*$ \cite[Sec. 4.2]{Ca85}, and a corresponding anti-isomorphism, $w \mapsto w^* = \delta(w)^{-1}$.  The isomorphism $\delta$ restricts to an isomorphism between $W^F = \Nr_{\bG}(\bT)^F/\bT^F$ and $(W^*)^{F^*}$ \cite[Sec. 4.4]{Ca85}.  We let $\ell$ denote the standard length function on these Weyl groups.

We recall that the $\bG^F$-conjugacy classes of $F$-stable maximal tori in $\bG$ may be classified by $F$-conjugacy classes of $W$ as follows \cite[Sec. 8.2]{CaEn04}.  If $\bT'$ is an $F$-stable torus in $\bG$, then $\bT' = {^g \bT}$ for some $g \in \bG$.  Then $g^{-1} F(g) \in \Nr_{\bG}(\bT)$ and $w = g^{-1} F(g) \bT \in W$.  The $\bG^F$-conjugacy class of $\bT'$ then corresponds to the $W\langle F \rangle$-conjugacy class of $wF$, which corresponds to the $F$-conjugacy class of $w$ in $W$.  Then we have $\bT'^F = g (\bT^{wF})g^{-1}$.  We say that $\bT'$ is an $F$-stable torus of $\bG$ of type $w$ (noting that the reference torus $\bT$ is fixed).  Since $\bT'^F$ and $\bT^{wF}$ are isomorphic, we work with $\bT^{wF}$ instead of $\bT'^F$.

Similar to the case of tori, the $\bG^F$-conjugacy classes of $F$-stable Levi subgroups are classified as follows.  Let $\bL$ be a Levi subgroup of a standard parabolic $\bP$, and given $w \in W$, let $\dot{w}$ denote an element in $\Nr_{\bG}(\bT)$ which reduces to $w$ in $W$.  Then any Levi subgroup of $\bG^F$ is isomorphic to $\bL^{\dot{w}F}$ for some $w \in W$, and we work with $\bL^{\dot{w}F}$ instead of the Levi subgroup of $\bG^F$.  For precise statements, see \cite[Sec. 8.2]{CaEn04}, \cite[Prop. 4.3]{DiMi90}, or \cite[Prop. 26.2]{MaTe11}.

If $\bT'$ is an $F$-stable torus of $\bG$ which is type $w$, then the $F^*$-stable maximal torus of type $F^*(w^*)$ in $\bG^*$ (with respect to $\bT^*$) is exactly the dual torus $(\bT')^*$ of $\bT'$.  It follows that the finite tori $\bT^{wF}$ and $\bT^{*(wF)^*}$ are in duality, and there is an isomorphism, which we fix as in \cite[Sec. 8.2]{CaEn04}, between $\bT^{*F^*}$ and the group of characters $\hat{\bT}^F$ of $\bT^F$,
\begin{align*}
\bT^{*F^*} & \longleftrightarrow   \hat{\bT}^F \\
s & \longmapsto \theta = \hat{s}.
\end{align*}
Since $\bT^{wF}$ is in duality with $\bT^{*(wF)^*}$, then we may replace $F$ with $wF$, and $F^*$ with $(wF)^*$ in the correspondence above.   In particular, if $s \in \bT^{*(wF)^*}$ for some $w \in W$, then we denote by $\hat{s}$ the corresponding character in $\hat{\bT}^{wF}$.

Consider any semisimple element $s_0 \in \bG^{*F^*}$.  Then $s_0$ is contained in an $F^*$-stable maximal torus of $\bG^*$, and as above, we have $s_0 \in g(\bT^{*(wF)^*})g^{-1}$ for some $w \in W$ and $g \in \bG^*$, which we fix with respect to $s_0$.  We correspond to $s_0$ the element $s = g^{-1} s_0  g\in \bT^*$, where $s$ is $(wF)^*$-fixed.  Given any element $s \in \bT^*$, define $W_F(s)$ as
$$ W_F(s) = \{ w \in W \, \mid \, {^{(wF)^*} s} = s \}.$$
Then, the semisimple elements in $\bG^{*F^*}$ correspond to elements $s \in \bT^*$ such that $W_F(s)$ is nonempty.  Given such an $s \in \bT^*$, consider $C_{\bG^*}(s)$ and its Weyl group $W^*(s)$ relative to $\bT^*$, and define $W(s)$ to be the collection of elements $w \in W$ such that $w^* \in W^*(s)$.  Then, as in \cite[Section 2]{DiMi90}, we may write 
$$ W_F(s) = w_1 W(s),$$
where $w_1 \in W_F(s)$ is such that $\bT^{*(w_1 F)^*}$ is the maximally split torus inside of $C_{\bG^*}(s)^{(\dot{w}_1 F)^*}$.  The maximal tori in $C_{\bG^*}(s)^{(\dot{w}_1 F)^*}$ are then each isomorphic to a torus of the form $\bT^{*(wF)^*}$, for $w \in W_F(s)$, by the same classification of maximal tori which we applied to $\bG^{*F^*}$.  We now give an application of Lemma \ref{Norm}.

\begin{lemma} \label{NormFrob}  Suppose $s, t \in \bT^{*(w_1F)^*}$, so $s, t \in C_{\bG^*}(s)^{(\dot{w}_1 F)^*}$, and suppose $C_{\bG^*}(s)$ is connected.  If $h \in \bG^{*(\dot{w}_1 F)^*}$ such that $h s h^{-1} = t$, then there is an element $\dot{v} \in \Nr_{\bG^*}(\bT^*)^{(\dot{w}_1 F)^*}$ such that $\dot{v} s \dot{v}^{-1} = t$.
\end{lemma}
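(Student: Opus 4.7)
Write $F' := (\dot{w}_1 F)^*$, so that $\bT^*$ is $F'$-stable (because $\dot{w}_1 \in \Nr_{\bG^*}(\bT^*)$). My strategy is to produce $\dot v$ in the form $\dot v = h c$ for a suitably chosen $c \in C_{\bG^*}(s)^{F'}$. Any such $c$ automatically gives $\dot v s \dot v^{-1} = h s h^{-1} = t$; the remaining conditions $\dot v \in \Nr_{\bG^*}(\bT^*)$ and $F'(\dot v) = \dot v$ translate, respectively, into $c \bT^* c^{-1} = h^{-1} \bT^* h$ and $F'(c) = c$ (the latter because $h$ is already $F'$-fixed).

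First I verify an unfixed version. Since $s = h^{-1} t h \in h^{-1} \bT^* h$, we have $h^{-1} \bT^* h \subseteq h^{-1} C_{\bG^*}(t) h = C_{\bG^*}(s)$, so $h^{-1} \bT^* h$ is a maximal torus of $C_{\bG^*}(s)$; it is $F'$-stable because $h \in \bG^{*F'}$. The torus $\bT^*$ is likewise an $F'$-stable maximal torus of the connected reductive group $C_{\bG^*}(s)$, so connectedness supplies some $c_0 \in C_{\bG^*}(s)$ with $c_0 \bT^* c_0^{-1} = h^{-1} \bT^* h$. The obstruction to $F'$-fixedness is the cocycle $n_0 := c_0^{-1} F'(c_0)$, which a short calculation places in $\Nr_{C_{\bG^*}(s)}(\bT^*)$.

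The main obstacle, therefore, is to trivialize this cocycle. I would apply Lemma~\ref{Norm} in its fixed-point form to $(\bG^*, C_{\bG^*}(s), \bT^*, F')$; the Borel hypothesis is satisfied because the choice of $w_1$ recalled immediately before the lemma places $\bT^*$ inside an $F'$-stable Borel subgroup of $C_{\bG^*}(s)$. The lemma, together with a Lang--Steinberg argument inside $C_{\bG^*}(s)$, allows $c_0$ to be replaced by $c_0 m$ for a suitable $m \in \Nr_{C_{\bG^*}(s)}(\bT^*)$ so that the updated cocycle lies in the connected torus $\bT^*$. A final application of Lang--Steinberg on $\bT^*$ then converts this $\bT^*$-valued cocycle to the identity, producing the desired $F'$-fixed conjugator $c$. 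The hardest step is showing that the image $\bar n_0 \in W(C_{\bG^*}(s), \bT^*)$ can be made trivial by $F'$-twisted conjugation; this is the point at which both the connectedness of $C_{\bG^*}(s)$ and the maximally-split property of $\bT^*$ inside $C_{\bG^*}(s)$ are used essentially.
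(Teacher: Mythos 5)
Your high-level strategy --- find $c \in C_{\bG^*}(s)^{F'}$ (where $F' := (\dot{w}_1 F)^*$) so that $\dot v = hc$ lands in $\Nr_{\bG^*}(\bT^*)^{F'}$ --- is the same as the paper's, but the argument you give does not close. The step you yourself flag as "the hardest step" is in fact the entire content of the lemma, and it is left unproved: you need $\bar n_0 \in W(C_{\bG^*}(s),\bT^*)$ to be $F'$-conjugate to the identity, but in general two $F'$-stable maximal tori of a connected reductive group lie in \emph{different} $F'$-rational conjugacy classes (the $F'$-twisted classes in the Weyl group parametrize exactly these). So $\bar n_0$ need not be trivializable, and nothing in your write-up forces it to be. Moreover, to quote Lemma \ref{Norm} at $(\bG^*, C_{\bG^*}(s), \bT^*, F')$ you must first know $h \in \Nr_{\bG^*}(C_{\bG^*}(s))^{F'}$; your containment $h^{-1}\bT^*h \subseteq C_{\bG^*}(s)$ is strictly weaker and does not give this.

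The missing ingredients, which the paper supplies, are: (i) observe that $h$ in fact normalizes $C_{\bG^*}(s)$ --- in the case used, $t = s^{-1}$, so $hC_{\bG^*}(s)h^{-1} = C_{\bG^*}(s^{-1}) = C_{\bG^*}(s)$; and (ii) conjugate the \emph{Borel pair}, not the torus alone. Taking $\bB_1$ to be the $F'$-stable Borel of $C_{\bG^*}(s)$ containing $\bT^*$ (which exists by the choice of $w_1$), the pair $(h^{-1}\bT^*h, h^{-1}\bB_1h)$ is again an $F'$-stable (torus, Borel) pair of $C_{\bG^*}(s)$, and one chooses $c_0 \in C_{\bG^*}(s)$ conjugating $(\bT^*,\bB_1)$ to $(h^{-1}\bT^*h, h^{-1}\bB_1h)$. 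Then $c_0^{-1}F'(c_0)$ stabilizes $(\bT^*,\bB_1)$ and hence already lies in $\bT^*$, so the "hardest step" disappears and a single Lang--Steinberg on $\bT^*$ finishes. This is precisely what the second assertion of Lemma \ref{Norm} encodes; once (i) is established the paper simply applies that assertion to $g = h$ to get $h = \dot v c$ with $\dot v \in \Nr_{\bG^*}(\bT^*)^{F'}$ and $c \in C_{\bG^*}(s)^{F'}$, whence $\dot v s \dot v^{-1} = h s h^{-1} = t$, avoiding the cocycle bookkeeping altogether.
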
 
\begin{proof}  We apply Lemma \ref{Norm} to the group $\bG^*$ with Frobenius morphism $(\dot{w}_1 F)^*$,  and with $\bG_1 = C_{\bG^*}(s)$, and $\bS = \bT^*$, so that $\bN^{F}$ in Lemma \ref{Norm} is $\Nr_{\bG^*} (\bT^*)^{(\dot{w}_1 F)^*}$.  Then, if $h \in \bG^{*(\dot{w}_1 F)^*}$ such that $h s h^{-1} = s^{-1}$, it follows that $h \in \Nr_{\bG^*}(C_{\bG^*}(s))^{(\dot{w}_1 F)^*} $.  It then follows from Lemma \ref{Norm} that there exists an element $\dot{v} \in \Nr_{\bG^*}(\bT^*)^{(\dot{w}_1 F)^*}$ such that $\dot{v} s \dot{v}^{-1} = t$.
\end{proof}

We remark that given $s, t \in \bT^{*(w_1F)^*}$ as in Lemma \ref{NormFrob}, we have $s$ and $t$ are conjugate in $\bG^{*(\dot{w}_1 F)^*}$ if and only if the corresponding semisimple elements in $\bG^{*F^*}$ are $\bG^{*F^*}$-conjugate, which follows from a direct calculation.

\section{Jordan decomposition of characters}  \label{Jord}

We recall the description of irreducible characters of a finite reductive group.  The results we state are presented in the texts \cite{CaEn04, Ca85, dmbook}, and were originally proved by Deligne and Lusztig \cite{dellusz} and Lusztig \cite{Lu78, Lu84, Lu88}.

We have $\bG$ a connected reductive group defined over a finite field $\FF_q$, and $F$ a Frobenius morphism of $\bG$, and we continue with the established notation in Section \ref{Prelim}.  From here, we assume that the center of $\bG$ is connected.  If $\theta$ is any character of $\bT^{wF}$, recall that there is generalized character of $\bG^F$, which we denote by $R_{\bT^{wF}}^{\bG^F} (\theta)$, as constructed by Deligne and Lusztig \cite{dellusz}.  Given any $s \in \bT^*$ with $W_F(s)$ nonempty, we let $\cE(\bG^F, s)$ denote the \emph{(geometric) Lusztig series} corresponding to $s$, which is the set of irreducible characters $\chi$ of $\bG^F$ such that, for some $w \in W_F(s)$, we have
$$ \langle \chi, R_{\bT^{wF}}^{\bG^F} (\hat{s}) \rangle \neq 0,$$
where $\langle \cdot, \cdot \rangle$ is the standard inner product on class functions.  Then the Lusztig series $\cE(\bG^F, s)$ partition the set of all irreducible characters of $\bG^F$, and two Lusztig series $\cE(\bG^F, s)$ and $\cE(\bG^F, t)$ coincide if and only the semisimple elements of $\bG^{*F^*}$ which correspond to $s, t \in \bT^*$ are $\bG^{*F^*}$-conjugate.  In particular, if $s_0$ is a semisimple element of $\bG^{*F^*}$ which corresponds to $s$, then we also denote the Lusztig series by $\cE(\bG, s_0)$.  The \emph{unipotent} characters of $\bG^F$ are those irreducible characters which are elements of $\cE(\bG^F, 1)$.  One can further define \emph{rational} Lusztig series.  We refer to \cite[p. 127]{CaEn04} and recall that the geometric and rational Lusztig series coincide when $\bG$ has connected center, or in the case of unipotent characters even when the center is not necessarily connected.

 Lusztig \cite{Lu84} constructed an explicit bijection, which we call the {\it Jordan decomposition} map,
\begin{equation} \label{JD1}
 J_s: \cE(\bG, s) \longrightarrow \cE(C_{\bG^*}(s)^{(\dot{w}_1 F)^*}, 1),
\end{equation}
with the property that, for any $w \in W_F(s)$,
\begin{equation} \label{inner}
\langle \chi, R_{\bT^{wF}}^{\bG^F}(\hat{s}) \rangle = \langle J_s(\chi), (-1)^{\ell(w_1)} R_{\bT^{*(wF)^*}}^{C_{\bG^*}(s)^{(\dot{w}_1 F)^*}} ({\bf 1}) \rangle.
\end{equation}
If we denote our Lusztig series in terms of the semisimple element $s_0 \in \bG^{*F^*}$, then we write the Jordan decomposition map as
\begin{equation} \label{JD2}
J_{s_0}: \cE(\bG, s_0) \longrightarrow \cE(C_{\bG^*}(s_0)^{F^*}, 1).
\end{equation}
We remark that Lusztig \cite{Lu88} and Digne and Michel \cite{DiMi90} extended the Jordan decomposition map to the case that $\bG$ does not necessarily have connected center.

When $\bG$ is a classical group, the property (\ref{inner}) completely characterizes the bijection between $\cE(\bG, s)$ and $\cE(C_{\bG^*}(s)^{(\dot{w}_1 F)^*}, 1)$, but this is not true in general.  Digne and Michel \cite{DiMi90} provided a list of properties of such a bijection which does uniquely characterize it, which we state below.  We must recall some notions before doing so.

First, for any Levi subgroup $\bL^{\dot{w}F}$ of $\bG^{F}$, there is the {\it Lusztig induction} functor $R_{\bL^{\dot{w}F}}^{\bG^F}$, which takes characters of $\bL^{\dot{w}F}$ to generalized characters of $\bG^F$, which is parabolic induction in the case that the Levi is contained in an $F$-stable parabolic.  Technically, the Lusztig induction functor may depend on the choice of parabolic subgroup which contains the Levi, although this is now known to be independent in most cases \cite{BoMi11}.

The Deligne-Lusztig characters $R_{\bT^{wF}}^{\bG^F}(\theta)$ are constructed using certain $l$-adic cohomology groups with compact support, and there are eigenvalues of $F^d$ associated to unipotent characters $\chi \in \cE(\bG^F, 1)$ arising from the action of $F^d$ on the representation spaces.  Finally, given an isogeny $\varphi: (\bG, F) \rightarrow (\bG_1, F_1)$, there exists some dual isogeny $\varphi^*: (\bG_1^*, F_1^*) \rightarrow (\bG^*, F^*)$.  We will not need the details of these constructions.

We may now state the following result of Digne and Michel \cite[Theorem 7.1]{DiMi90}.  

\begin{thm}[Digne and Michel, 1990] \label{UniqueJord}  Given any $s \in \bT^*$ such that $W_F(s)$ is nonempty, there exists a unique bijection:
$$ J_s : \cE(\bG^F, s) \longrightarrow \cE(C_{\bG^*}(s)^{(\dot{w}_1 F)^*}, 1)$$
which satisfies the following conditions:
\begin{enumerate}
\item For any $\chi \in \cE(\bG^F, s)$, and any $w \in W_F(s)$, 
$$ \langle \chi, R_{\bT^{wF}}^{\bG^F}(\hat{s}) \rangle = \langle J_s(\chi), (-1)^{\ell(w_1)} R_{\bT^{*(wF)^*}}^{C_{\bG^*}(s)^{(\dot{w}_1 F)^*}} ({\bf 1}) \rangle.$$
\item If $s=1$ then:
\begin{itemize}
\item[(a)] The eigenvalues of $F^{d}$ associated to $\chi$ are equal, up to a power of $q^{d/2}$, to the eigenvalues of $F^{*d}$ associated to $J_1(\chi)$.
\item[(b)] If $\chi$ is in the principal series then $J_1(\chi)$ and $\chi$ correspond to the same character of the Hecke algebra.
\end{itemize}

\item If $z \in Z(\bG^{*F^*})$ is central, and $\chi \in \cE(\bG^F, s)$, then $J_{sz}(\chi \otimes \hat{z}) = J_s(\chi)$.
\item If $\bL$ is a standard Levi subgroup of $\bG$ such that $\bL^*$ contains $C_{\bG^*}(s)$ and such that $\bL$ is $\dot{w}F$-stable, then the following diagram is commutative:

$$\begin{CD}
\cE(\bG^F, s)           @>J_s>> \cE(C_{\bG^*}(s)^{(\dot{w}_1 F)^*}, 1) \\
@AAR_{\bL^{\dot{w}F}}^{\bG^F}A            @|\\
\cE(\bL^{\dot{w}F}, s)         @>J_s>> \cE(C_{\bL^*}(s)^{(\dot{v}\dot{w}F)^*}, 1)
\end{CD}$$
where $\dot{v} \dot{w} = \dot{w}_1$, and we extend $J_s$ by linearity to generalized characters.

\item Assume $(W, F)$ is irreducible, $(\bG, F)$ is of type $E_8$, and $(C_{\bG^*}(s), (\dot{w}_1 F)^*)$ is of type $E_7 \times A_1$ (respectively, $E_6 \times A_2$, respectively ${^2 E_6} \times {^2 A_2}$).  Let $\bL$ be a Levi of $\bG$ of type $E_7$ (respectively $E_6$, respectively $E_6$) which contains the corresponding component of $C_{\bG^*}(s)$.  Then the following diagram is commutative:
$$\begin{CD}
\cE(\bG^F, s)                  @>J_s>>     \cE(C_{\bG^*}(s)^{(\dot{w}_1 F)^*}, 1) \\
@AAR_{\bL^{\dot{w}_2F}}^{\bG^F}A                  @AAR_{\bL^{*(\dot{w}_2 F)^*}}^{C_{\bG^*}(s)^{(\dot{w}_1 F)^*}}A \\
\cE(\bL^{\dot{w}_2F}, s)^{\bullet}  @>J_s>>     \cE(\bL^{*(\dot{w}_2F)^*}, 1)^{\bullet}
\end{CD}$$
where the superscript $\bullet$ denotes the cuspidal part of the Lusztig series, and $w_2 = 1$ (respectively $1$, respectively the $W_{\bL}$-reduced element of $W_F(s)$ which is in a parabolic subgroup of type $E_7$ of $W$).

\item Given an epimorphism $\varphi: (\bG, F) \rightarrow (\bG_1, F_1)$ such that $\mathrm{ker}(\varphi)$ is a central torus, and semisimple elements $s_1 \in \bG_1^{*F_1^*}$, $s = \varphi^*(s_1) \in \bG^{*F^*}$, the following diagram is commutative:
$$\begin{CD}
\cE(\bG^F, s)                  @>J_s>>     \cE(C_{\bG^*}(s)^{(\varphi(\dot{w}_1) F)^*}, 1) \\
@AA{^t \varphi}A                           @VV{^t \varphi^*}V \\
\cE(\bG_1^{F_1}, s_1)           @>J_{s_1}>>  \cE(C_{\bG_1^*}(s_1)^{(\dot{w}_1 F_1)^*}, 1)
\end{CD}$$

\item If $\bG$ is a direct product, $\bG = \prod_i \bG_i$, then $J_{\prod_i s_i} = \prod_i J_{s_i}$.

\end{enumerate}
\end{thm}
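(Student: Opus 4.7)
The plan is to prove existence and uniqueness separately, following the general strategy of Digne--Michel. For existence, I would take $J_s$ to be the bijection originally constructed by Lusztig \cite{Lu84}, which is already known to satisfy condition (1) by construction, and then verify that it satisfies conditions (2)--(7) using the explicit description of the Lusztig families and almost characters on both sides.

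The substantive content is the uniqueness, which I would establish by a systematic reduction to a short list of irreducible cases. First, (7) reduces the problem to the case that $(\bG,F)$ has no nontrivial $F$-stable product decomposition. Next, (3) lets me assume $s$ cannot be replaced by a twist $sz^{-1}$ with $z \in Z(\bG^{*F^*})$ lying in a smaller Lusztig series already handled. Then (4) reduces to the case where $s$ is \emph{quasi-isolated}, i.e.\ $C_{\bG^*}(s)$ is not contained in any proper Levi $\bL^*$: whenever such an $\bL$ exists, Lusztig induction $R_{\bL^{\dot{w}F}}^{\bG^F}$ restricts, after the twist by the appropriate sign, to a bijection from $\cE(\bL^{\dot{w}F},s)$ onto $\cE(\bG^F,s)$ (this is the key Bonnaf\'e-type result on induction of series), and the commutativity of the diagram then forces $J_s$ to be determined by the map on the smaller group, which is an induction hypothesis on the semisimple rank. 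Condition (6) additionally allows me to pass, via a central isogeny, to a convenient form of $\bG$ in each isogeny class, for instance reducing to the simply connected case so that centralizers of semisimple elements in the dual are automatically connected.

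After these reductions, the problem concentrates on a finite list of pairs $(\bG,s)$ with $\bG$ quasi-simple and $s$ quasi-isolated. For each such pair, property (1) alone must be shown to determine $J_s$, or else an extra property from (2), (5) must be invoked. In the classical types the Deligne--Lusztig characters $R_{\bT^{wF}}^{\bG^F}(\hat{s})$, as $w$ runs over $W_F(s)$, span enough of the Lusztig series that (1) already pins down $J_s$; this recovers the statement made in the paper that (1) alone suffices for classical $\bG$.

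The main obstacle, and the reason for the ad hoc-looking conditions (2) and (5), is that for exceptional groups there exist Lusztig families in which several unipotent characters have identical inner products with every Deligne--Lusztig character, so (1) genuinely cannot separate them. Here the plan is to use the supplementary data tailored to exactly the ambiguous cases: for $s=1$ the eigenvalues of $F^d$ and the Hecke-algebra parameters from (2) distinguish the characters in each family, while the specific Lusztig inductions from $E_7$ or $E_6$ Levis described in (5) resolve the remaining ambiguity for $(\bG,s)$ of type $E_8$ with $C_{\bG^*}(s)$ of type $E_7\times A_1$, $E_6\times A_2$, or ${}^2E_6\times {}^2A_2$. To finish, I would run through the character tables of the quasi-isolated cases in types $G_2,F_4,E_6,{}^2E_6,E_7,E_8$ and verify in each one that the combined constraints from (1)--(6) single out a unique bijection; this case-check is technical but finite, and I expect it to be the bulk of the work.
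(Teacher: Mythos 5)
The paper does not actually prove this statement; it cites it directly as \cite[Theorem 7.1]{DiMi90}, Digne and Michel's own result. There is no proof in the paper to compare against, so your proposal should be judged as a sketch of Digne--Michel's argument rather than of anything appearing here.

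As such, your outline captures the broad shape of their strategy --- take Lusztig's bijection for existence and establish property (1), then for uniqueness use properties (7), (3), (4), (6) to reduce to quasi-isolated $s$ in a quasi-simple group, and finally case-check, invoking (2) and (5) to break the ties in the exceptional groups that (1) alone cannot resolve. However, one step is stated backwards. You claim (6) lets you ``reduce to the simply connected case so that centralizers of semisimple elements in the dual are automatically connected.'' If $\bG$ is simply connected then $\bG^*$ is adjoint, and in an adjoint group the centralizer $C_{\bG^*}(s)$ of a semisimple element need \emph{not} be connected. Steinberg's connectedness theorem applies when the derived group of $\bG^*$ is simply connected, which is precisely the situation when $Z(\bG)$ is connected --- the standing hypothesis in this paper and the setting in which the theorem is formulated. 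So the isogeny reduction should go toward a group with connected center (e.g.\ a group of adjoint type, whose dual is simply connected), not toward the simply connected group. Beyond this, your phrase ``Bonnaf\'e-type result'' for the Lusztig-induction bijection between $\cE(\bL^{\dot{w}F},s)$ and $\cE(\bG^F,s)$ when $C_{\bG^*}(s)\subseteq \bL^*$ is anachronistic --- that reduction was already in Lusztig's work and is used by Digne--Michel directly --- but the mathematical content of that step is correct. The rest of the sketch (existence via \cite{Lu84}, reduction by product decomposition, central twists, quasi-isolated classification, finite case-check in types $G_2, F_4, E_6, {}^2E_6, E_7, E_8$, and the role of eigenvalues of Frobenius and Hecke algebra data for $s=1$) is consistent with Digne--Michel's actual proof, though of course the case analysis itself is where nearly all the work lies and you have left it entirely unelaborated.
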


We will need the following result, which follows from the above.

\begin{lemma} \label{ConjJ}  Let $s \in \bT^{*(w_1 F)^*}$, so that $s, s^{-1} \in C_{\bG^*}(s)^{(\dot{w}_1 F)^*}$, and suppose that there exists $\dot{v} \in \Nr_{\bG^*}(\bT^*)^{(\dot{w}_1 F)^*}$ such that $\dot{v} s \dot{v}^{-1} = s^{-1}$.  Let $J_s$, $J_{s^{-1}}$ be the maps as described in Theorem \ref{UniqueJord}.  If $J_s(\chi) = \psi$, then $J_{s^{-1}}(\chi) = {^{\dot{v}} \psi}$.
\end{lemma}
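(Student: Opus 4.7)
The plan is to invoke the uniqueness in Theorem~\ref{UniqueJord}. Define
\[
\tilde{J}: \cE(\bG^F, s^{-1}) \;\longrightarrow\; \cE(C_{\bG^*}(s^{-1})^{(\dot{w}_1 F)^*}, 1), \qquad \tilde{J}(\chi) = {^{\dot{v}} J_s(\chi)},
\]
and verify that $\tilde J$ satisfies the seven characterizing properties of $J_{s^{-1}}$; by uniqueness $\tilde J = J_{s^{-1}}$, which is the lemma. Preliminary observations: $s$ and $s^{-1}$ commute with the same elements, so $C_{\bG^*}(s) = C_{\bG^*}(s^{-1})$ and $W_F(s) = W_F(s^{-1})$, and the same $w_1$ serves both. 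The hypothesis makes $\dot{v}$ normalize $C_{\bG^*}(s)$, and being $(\dot{w}_1 F)^*$-fixed, it gives an automorphism of $C_{\bG^*}(s)^{(\dot{w}_1 F)^*}$ which permutes maximal tori and fixes the trivial character, hence preserves the set of unipotent characters; so $\tilde{J}$ is a well-defined bijection. Also $\cE(\bG^F, s) = \cE(\bG^F, s^{-1})$ by the remark following Lemma~\ref{NormFrob}, so the domain is as stated.

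The substantive step is condition~(1): for every $w \in W_F(s^{-1}) = W_F(s)$,
\[
\langle \chi, R_{\bT^{wF}}^{\bG^F}(\widehat{s^{-1}}) \rangle \;=\; \langle {^{\dot{v}} J_s(\chi)},\, (-1)^{\ell(w_1)} R_{\bT^{*(wF)^*}}^{C_{\bG^*}(s)^{(\dot{w}_1 F)^*}}(\mathbf{1}) \rangle.
\]
My plan is to move $\dot{v}$ off of $J_s(\chi)$ via the class-function identity $\langle {^{\dot{v}}\psi}, R \rangle = \langle \psi, {^{\dot{v}^{-1}}R} \rangle$, to identify ${^{\dot{v}^{-1}} R_{\bT^{*(wF)^*}}^{C_{\bG^*}(s)^{(\dot{w}_1 F)^*}}(\mathbf{1})}$ as a Deligne--Lusztig induction from the conjugated torus $\dot{v}^{-1}\bT^{*(wF)^*}\dot{v}$, which is a maximal torus of $(C_{\bG^*}(s), (\dot{w}_1 F)^*)$ of some type $w' \in W_F(s)$, to apply property~(\ref{inner}) for $J_s$ translating the right-hand side into the $\bG^F$-pairing $\langle \chi, R_{\bT^{w'F}}^{\bG^F}(\hat{s}) \rangle$, and then to use the natural equivariance of the duality $\bT^{*(wF)^*} \leftrightarrow \hat{\bT}^{wF}$ under $\dot{v}$-conjugation, together with $\dot{v} s \dot{v}^{-1} = s^{-1}$, to match this with the left-hand side $\langle \chi, R_{\bT^{wF}}^{\bG^F}(\widehat{s^{-1}}) \rangle$.

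The remaining conditions (2)--(7) are inherited from $J_s$ by tracking the $\dot{v}$-conjugation: (2) is vacuous unless $s=1$, in which case $s = s^{-1}$, $\dot{v} \in C_{\bG^*}(s)$ is inner, and ${^{\dot v}\psi}=\psi$; (3) uses that central elements are $\dot v$-fixed; (4)--(6) follow from the compatibility of Lusztig induction and the dual-isogeny transfers ${}^t\varphi,\,{}^t\varphi^*$ with conjugation by $\dot{v}$; (7) is immediate. The main obstacle is condition~(1): since $\dot{v}$ is only $(\dot{w}_1 F)^*$-fixed and generally not $(wF)^*$-fixed, conjugation by $\dot{v}$ does not preserve the individual torus $\bT^{*(wF)^*}$, forcing one to reinterpret $\bT^{*(wF)^*}$ as a maximal torus of $(C_{\bG^*}(s), (\dot{w}_1 F)^*)$ of type $w_1^{-1}w \in W(s)$, and to carefully track the various twisted Frobenii on both the group and dual sides through the duality.
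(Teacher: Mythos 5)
Your plan is a genuinely different route from the paper's, but as written it has a gap, and you circle right past the much shorter argument the paper actually uses.

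The paper's proof is essentially a one-step application of condition~(6) of Theorem~\ref{UniqueJord}. Given $\dot{v}\in\Nr_{\bG^*}(\bT^*)^{(\dot{w}_1 F)^*}$, one passes to an $F^*$-fixed element $\dot{v}_0$ conjugating $s_0$ to $s_0^{-1}$ in $\bG^{*F^*}$, takes the corresponding $\dot{w}_0\in\Nr_\bG(\bT)^F$ via the anti-isomorphism $W\to W^*$, and sets $\varphi=\mathrm{ad}\,\dot{w}_0$, an isogeny $(\bG,F)\to(\bG,F)$ with dual acting as $\mathrm{ad}\,\dot{v}$ on $C_{\bG^*}(s)^{(\dot{w}_1 F)^*}$. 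Condition~(6) gives a commutative square whose left vertical arrow is the identity (because $\mathrm{ad}\,\dot{w}_0$ is an \emph{inner} automorphism of $\bG^F$, hence fixes every character), and the right vertical arrow is $\mathrm{ad}\,\dot{v}$; reading off the diagram gives $J_{{}^{\dot{v}}s}(\chi)={}^{\dot{v}}\psi$, which is the lemma. Notice that your own phrase ``(4)--(6) follow from the compatibility of Lusztig induction and the dual-isogeny transfers with conjugation by $\dot{v}$'' is precisely this observation --- you have the key fact in hand but use it as one of seven routine checks, rather than noticing that condition~(6), applied once to this particular isogeny, yields the whole lemma immediately and makes the remaining six verifications unnecessary.

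Beyond being far more laborious, your uniqueness strategy as stated is not complete. First, conditions~(3), (4), (6), (7) of Theorem~\ref{UniqueJord} relate $J_t$ for \emph{different} parameters $t$; the uniqueness statement really concerns the whole family $(J_t)_t$, so verifying the conditions for a single bijection $\tilde J$ attached only to $s^{-1}$ (with all other $J_t$ left equal to the original ones) either leads to circularity or requires you to spell out how the two definitions are being spliced. The paper's proof of Theorem~\ref{MainThm} handles this correctly by defining a candidate $\mu_t$ uniformly for \emph{every} $t$; your proposal does not. Second, and more to the point, you explicitly flag condition~(1) as ``the main obstacle'' and leave it as a plan rather than a computation: since $\dot v$ is $(\dot w_1 F)^*$-fixed but not $(wF)^*$-fixed, conjugation by $\dot v^{-1}$ moves $\bT^{*(wF)^*}$ to a torus of a different type in $C_{\bG^*}(s)^{(\dot w_1 F)^*}$, and the bookkeeping that matches $\langle\chi, R_{\bT^{wF}}^{\bG^F}(\widehat{s^{-1}})\rangle$ against $\langle\psi, R^{C_{\bG^*}(s)^{(\dot w_1 F)^*}}_{\text{(conjugated torus)}}(\mathbf 1)\rangle$ through (\ref{inner}) and the torus duality is exactly the nontrivial step. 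That you identify this as an obstacle without resolving it is the gap; the paper's proof avoids it entirely by not re-deriving condition~(1) at all.
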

\begin{proof}  Let $s_0 \in \bG^{*F^*}$ be the semisimple element associated with $s$.  From the assumption, we have some $\dot{v}_0 \in \Nr_{\bG^*}(\bT^*)^{F^*}$ such that $\dot{v}_0 s_0 \dot{v}_0^{-1} =s_0^{-1}$.  Since $s_0$ and $s_0^{-1}$ are conjugate in $\bG^{*F^*}$, then $\cE(\bG^F, s) = \cE(\bG^F, s^{-1}) = \cE(\bG^F, {^{\dot{v}} s})$.  Given the element $\dot{v}_0 \in \bG^{*F^*}$, where $v_0 \in W^{*F^*}$, consider $w_0 \in W^{ F}$ which satisfies $w_0^* = v_0$, and take $\dot{w}_0 \in \Nr_{\bG}(\bT)^{F}$.  Then the inner automorphism given by $\mathrm{ad} \, \dot{w}_0 = \varphi$ on $\bG^{F}$, which acts trivially on $\cE(\bG^F, s) = \cE(\bG^F, {^{\dot{v}} s})$, is such that $\varphi^*$ acts on $C_{\bG^*}(s)^{(\dot{w}_1 F)^*}$ by $\mathrm{ad} \, \dot{v}$.
  By Theorem \ref{UniqueJord}, property (6), we then have the commutative diagram
$$\begin{CD}
\cE(\bG^F, s)                  @>J_s>>     \cE(C_{\bG^*}(s)^{(\dot{w}_1 F)^*}, 1) \\
@|                         @VV\mathrm{ad} \, \dot{v}V \\
\cE(\bG^F, {^{\dot{v}} s})           @>J_{^{\dot{v}} s}>>  \cE(C_{\bG^*}(s)^{(\dot{w}_1 F)^*}, 1)
\end{CD}$$
That is, if $J_s(\chi) = \psi$, then we have $J_{^{\dot{v}} s} (\chi) = {^{\dot{v}} \psi}$. 
\end{proof}

We note that the result of Digne and Michel, Theorem \ref{UniqueJord}, is studied in more detail in several places.  Enguehard \cite{En08} studies the implications of the result for the block theory of characters of finite reductive groups, and Cabanes and Sp\"ath \cite{CaSp13} study in detail and apply the equivariance property (6) of Theorem \ref{UniqueJord}.

\section{Proof of the main theorem} \label{Main}

The following result is \cite[Proposition 11.4]{dmbook}.  We will apply it repeatedly in the proof of the main result.

\begin{lemma} \label{DLbar} For any Levi subgroup $\bL^{\dot{w}F}$ of $\bG^F$, and any character $\alpha$ of $\bL^{\dot{w}F}$, we have $\overline{R_{\bL^{\dot{w}F}}^{\bG^F}(\alpha)} = R_{\bL^{\dot{w}F}}^{\bG^F} (\bar{\alpha})$.
\end{lemma}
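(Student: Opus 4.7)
The plan is to prove the identity by comparing character values at an arbitrary $g \in \bG^F$, using the Deligne--Lusztig character-theoretic description of Lusztig induction. Specifically, let $\mathbf{Y}$ denote the Deligne-Lusztig variety associated with the Levi $\bL^{\dot{w}F}$ and a choice of parabolic subgroup; then
\[
R_{\bL^{\dot{w}F}}^{\bG^F}(\alpha)(g) \;=\; \frac{1}{|\bL^{\dot{w}F}|} \sum_{l \in \bL^{\dot{w}F}} \mathcal{L}(g,l)\,\alpha(l^{-1}),
\]
where $\mathcal{L}(g,l) = \sum_i (-1)^i \mathrm{Tr}\bigl((g,l) \mid H_c^i(\mathbf{Y}, \bar{\mathbb{Q}}_\ell)\bigr)$ is the Lefschetz number of the action of $(g,l)$ on $\mathbf{Y}$. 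Applying complex conjugation term by term, and using the trivial identity $\overline{\alpha(l^{-1})} = \bar{\alpha}(l^{-1})$ from the definition of the conjugate character, reduces the statement to the assertion that each coefficient $\mathcal{L}(g,l)$ is fixed by complex conjugation, i.e.\ is a real number.

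The key step, and essentially the only non-formal input, is to invoke the theorem that for any finite-order algebraic automorphism of a variety over $\overline{\mathbb{F}}_q$, the associated Lefschetz number is a rational integer, independent of the choice of $\ell$ and of the embedding $\bar{\mathbb{Q}}_\ell \hookrightarrow \mathbb{C}$. I would cite this as a standard consequence of the Grothendieck--Lefschetz trace formalism, together with the integrality results of Deligne--Lusztig. Because every element of the finite group $\bG^F \times \bL^{\dot{w}F}$ has finite order, this applies to $(g,l)$, yielding $\overline{\mathcal{L}(g,l)} = \mathcal{L}(g,l)$.

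Combining these two ingredients gives
\[
\overline{R_{\bL^{\dot{w}F}}^{\bG^F}(\alpha)(g)} \;=\; \frac{1}{|\bL^{\dot{w}F}|} \sum_{l \in \bL^{\dot{w}F}} \mathcal{L}(g,l)\,\bar{\alpha}(l^{-1}) \;=\; R_{\bL^{\dot{w}F}}^{\bG^F}(\bar{\alpha})(g)
\]
for all $g \in \bG^F$, which is the desired equality of generalized characters. The main obstacle is not in the chain of the argument itself, which is formal once the character formula is in hand, but in the integrality of Lefschetz numbers, which is a genuinely deep input from the theory of $\ell$-adic cohomology. An alternative approach would be to use Poincar\'e duality to identify $H_c^i(\mathbf{Y}, \bar{\mathbb{Q}}_\ell)^{\vee}$ with a Tate-twisted shift of ordinary cohomology of $\mathbf{Y}$, and then compare with $H_c^*$ using smoothness of $\mathbf{Y}$; but this seems to trade one deep ingredient for another of comparable depth, so the direct character-formula route is preferable.
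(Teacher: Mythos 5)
The paper does not give its own proof of this lemma, citing it directly as Proposition 11.4 of Digne and Michel's book; your proof is correct and is essentially the argument given there, combining the character formula for Lusztig induction (which expresses $R_{\bL^{\dot{w}F}}^{\bG^F}(\alpha)(g)$ as an $|\bL^{\dot{w}F}|^{-1}$-weighted sum of values $\alpha(l^{-1})$ against Lefschetz numbers on the relevant Deligne--Lusztig variety) with the Deligne--Lusztig integrality theorem that Lefschetz numbers of finite-order automorphisms are rational integers, hence fixed by complex conjugation.
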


We also need the following observation.

\begin{lemma} \label{Ebar} We have $\chi \in \cE(\bG^F, s)$ if and only if $\bar{\chi} \in \cE(\bG^F, s^{-1})$.  Thus, if $\chi$ is real-valued, $\cE(\bG^F, s) = \cE(\bG^F, s^{-1})$, and $s$ and $s^{-1}$ are conjugate in $\bG^{*(\dot{w}_1F)^*}$.
\end{lemma}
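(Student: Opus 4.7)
The plan is to translate the question about complex conjugate characters into a statement about Deligne-Lusztig characters via the defining property of the geometric Lusztig series, and then to apply Lemma \ref{DLbar}. Two elementary facts are needed first. Since the duality $\bT^{*(wF)^*} \longleftrightarrow \hat{\bT}^{wF}$ is a group isomorphism, it sends $s^{-1}$ to $\hat{s}^{-1}$, which for a character of a finite abelian group is just $\overline{\hat{s}}$; thus $\widehat{s^{-1}} = \overline{\hat{s}}$. The defining condition ${}^{(wF)^*}s = s$ of $W_F(s)$ is visibly preserved by $s \mapsto s^{-1}$, so $W_F(s) = W_F(s^{-1})$.

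With these in hand, the first assertion reduces to a direct computation: for any $w \in W_F(s) = W_F(s^{-1})$, Lemma \ref{DLbar} gives
\begin{equation*}
\langle \bar{\chi},\, R_{\bT^{wF}}^{\bG^F}(\widehat{s^{-1}}) \rangle = \langle \bar{\chi},\, \overline{R_{\bT^{wF}}^{\bG^F}(\hat{s})} \rangle = \overline{\langle \chi,\, R_{\bT^{wF}}^{\bG^F}(\hat{s}) \rangle},
\end{equation*}
the last equality being a standard property of the Hermitian inner product on class functions. Thus this pairing is nonzero for $(\bar{\chi}, s^{-1})$ exactly when it is nonzero for $(\chi, s)$, and the definition of the geometric Lusztig series yields $\chi \in \cE(\bG^F, s)$ if and only if $\bar{\chi} \in \cE(\bG^F, s^{-1})$.

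For the ``thus'' clause, if $\chi$ is real-valued then $\chi = \bar{\chi}$ belongs to both $\cE(\bG^F, s)$ and $\cE(\bG^F, s^{-1})$. Since the geometric Lusztig series partition $\mathrm{Irr}(\bG^F)$, this forces $\cE(\bG^F, s) = \cE(\bG^F, s^{-1})$, and the coincidence criterion recalled at the start of Section \ref{Jord} then says that the semisimple elements of $\bG^{*F^*}$ corresponding to $s$ and $s^{-1}$ are $\bG^{*F^*}$-conjugate. Applying the remark following Lemma \ref{NormFrob} with $t = s^{-1}$ finally translates this into the claimed $\bG^{*(\dot{w}_1 F)^*}$-conjugacy of $s$ and $s^{-1}$; the connected-center hypothesis on $\bG$ enters here implicitly, since it guarantees the connectedness of $C_{\bG^*}(s)$ required in Lemma \ref{NormFrob}.

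The argument is largely formal given Lemma \ref{DLbar}, the partition property of the geometric Lusztig series, and the remark after Lemma \ref{NormFrob}. The only step that warrants real care is confirming that the normalization of the duality fixed in Section \ref{Prelim} does send inverses to complex-conjugate characters, i.e.\ the identity $\widehat{s^{-1}} = \overline{\hat{s}}$; this is standard, but is the one place where the specific choice of the isomorphism $\bT^{*F^*} \longleftrightarrow \hat{\bT}^F$ is invoked. I do not anticipate any genuine obstacle beyond this bookkeeping.
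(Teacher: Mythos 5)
Your proof is correct and takes essentially the same route as the paper: both rely on Lemma \ref{DLbar}, the inner-product characterization of the geometric Lusztig series, and the partition property, followed by the conjugacy translation recalled after Lemma \ref{NormFrob}. You make explicit two small points the paper leaves implicit (that $\widehat{s^{-1}} = \overline{\hat{s}}$ and that $W_F(s) = W_F(s^{-1})$), which is reasonable bookkeeping but not a different argument.
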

\begin{proof} We have $\chi \in \cE(\bG^F, s)$ if and only if  
$$\langle \chi, R_{\bT^{wF}}^{\bG^F}(\hat{s}) \rangle  = \langle \bar{\chi}, \overline{R_{\bT^{wF}}^{\bG^F}(\hat{s})} \rangle  \neq 0,$$
for some $w \in W_F(s)$.  From Lemma \ref{DLbar}, this holds if and only if 
$$\langle \bar{\chi}, R_{\bT^{wF}}^{\bG^F}(\widehat{s^{-1}}) \rangle \neq 0,$$
which holds precisely when $\bar{\chi} \in \cE(\bG^F, s^{-1})$.

Hence, if $\chi$ is real-valued, we have $\chi \in \cE(\bG^F, s)$ and $\bar{\chi} = \chi \in \cE(\bG^F, s^{-1})$.  Since Lusztig series which intersect must be equal, we have $\cE(\bG^F, s) = \cE(\bG^F, s^{-1})$.  This holds if and only if $s$ and $s^{-1}$ are conjugate in $\bG^{*(\dot{w}_1F)^*}$, that is, when their associated semisimple elements of $\bG^{*F^*}$ are conjugate in $\bG^{*F^*}$.
\end{proof}

We now arrive at our main result, which we state in terms of the notation in (\ref{JD2}) for convenience.

\begin{thm} \label{MainThm}  Suppose $\chi$ is an irreducible character of $\bG^F$, and $s_0 \in \bG^{*F^*}$ is semisimple, where $\chi \in \cE(\bG^F, s_0)$.  Let $J_{s_0}(\chi) = \nu$, where $J_{s_0}: \cE(\bG^F, s_0) \longrightarrow \cE(C_{\bG^*}(s_0)^{F^*}, 1)$ is the Jordan decomposition map.  Then $\chi$ is real-valued if and only if the following hold:
\begin{enumerate}
\item[(i)] The element $s_0$ is real in $\bG^{*F^*}$, so $h_0s_0 h_0^{-1} = s_0^{-1}$ for some $h_0 \in \bG^{*F^*}$.
\item[(ii)] If $h_0 \in \bG^{*F^*}$ satisfies $h_0 s_0 h_0^{-1} = s_0^{-1}$, then $\nu$ satisfies ${^{h_0} \nu} = \bar{\nu}$.
\end{enumerate}
That is, if $\chi$ corresponds to the pair $(s_0, \nu)$ in the Jordan decomposition, then $\chi$ is real-valued if and only if the pair $(s_0, \nu)$ is $\bG^{*F^*}$-conjugate to the pair $(s_0^{-1}, \bar{\nu})$.
\end{thm}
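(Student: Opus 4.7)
My plan is to deduce the theorem from the single key identity
\[
 J_{s^{-1}}(\bar\chi) = \overline{J_s(\chi)} \qquad \text{for all } \chi \in \cE(\bG^F, s),
\]
which says that complex conjugation of characters intertwines the Jordan decomposition maps for $s$ and $s^{-1}$. Granting this, the forward direction proceeds as follows: if $\chi$ is real-valued then Lemma \ref{Ebar} gives (i); by Lemma \ref{NormFrob} we may pick $\dot v \in \Nr_{\bG^*}(\bT^*)^{(\dot w_1 F)^*}$ with $\dot v s \dot v^{-1} = s^{-1}$, and Lemma \ref{ConjJ} yields $J_{s^{-1}}(\chi) = {^{\dot v} \nu}$, while the key identity gives $J_{s^{-1}}(\bar\chi) = \bar\nu$. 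Since $\chi = \bar\chi$, injectivity of $J_{s^{-1}}$ forces ${^{\dot v} \nu} = \bar\nu$; translating $\dot v$ to an element $h_0 \in \bG^{*F^*}$ via the correspondence of Section \ref{Prelim} gives condition (ii) for this particular $h_0$, and the statement for arbitrary $h_0$ follows because any two choices differ by an element of $C_{\bG^*}(s_0)^{F^*}$, which fixes $\nu$ under inner conjugation. The backward direction reverses the same chain: (i) and (ii) together give $J_{s^{-1}}(\chi) = \bar\nu = J_{s^{-1}}(\bar\chi)$, and injectivity forces $\chi = \bar\chi$.

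To prove the key identity, I would define
\[
\tilde J_{s^{-1}} : \cE(\bG^F, s^{-1}) \longrightarrow \cE(C_{\bG^*}(s)^{(\dot w_1 F)^*}, 1), \qquad \tilde J_{s^{-1}}(\bar\chi) := \overline{J_s(\chi)},
\]
which is a well-defined bijection by Lemma \ref{Ebar}, and appeal to the uniqueness in Theorem \ref{UniqueJord}: it suffices to check that $\tilde J_{s^{-1}}$ satisfies the seven characterizing properties there. Property (1) follows from property (1) for $J_s$ by taking complex conjugates, applying Lemma \ref{DLbar}, and using that the relevant Deligne--Lusztig character of $C_{\bG^*}(s)^{(\dot w_1 F)^*}$ with $\mathbf 1$ is integer-valued and that $\overline{\hat s} = \widehat{s^{-1}}$. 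Properties (3), (6), and (7) follow by conjugating the corresponding diagrams for $J_s$ termwise, noting $\overline{\chi \otimes \hat z} = \bar\chi \otimes \widehat{z^{-1}}$ for the central twist. Properties (4) and (5) follow because Lusztig induction commutes with complex conjugation by Lemma \ref{DLbar} and because cuspidality is preserved under $\chi \mapsto \bar\chi$. For property (2), complex conjugation carries each Frobenius eigenvalue on the cohomology of the relevant Deligne--Lusztig variety to its complex conjugate, and sends the Hecke-algebra character attached to $\chi$ to its complex conjugate; both (2a) and (2b) for $\tilde J_1$ therefore reduce to the complex conjugates of (2a) and (2b) for $J_1$.

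The main obstacle is the careful verification of property (2), which requires making precise the assertion that complex conjugation of a unipotent character induces complex conjugation on its attached $F^d$-eigenvalues — morally clear since the $F^d$-action on an irreducible representation $V$ is determined up to scalars by $V$ up to duality, but rigorously requiring Lusztig's explicit description. A secondary and lighter technical task is the bookkeeping translating $\dot v \in \Nr_{\bG^*}(\bT^*)^{(\dot w_1 F)^*}$ to an element $h_0 \in \bG^{*F^*}$ with $h_0 s_0 h_0^{-1} = s_0^{-1}$; this amounts to conjugating by the fixed $g \in \bG^*$ such that $s_0 = g s g^{-1}$ and unraveling the resulting identifications of centralizers.
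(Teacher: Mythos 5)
Your proposal is essentially the same route the paper takes: define $\tilde J_{s^{-1}}(\bar\chi) := \overline{J_s(\chi)}$, show it satisfies the seven Digne--Michel properties, and conclude $\tilde J_{s^{-1}} = J_{s^{-1}}$ by uniqueness; then combine this key identity with Lemmas \ref{NormFrob} and \ref{ConjJ}. The handling of the forward/backward directions, and the observation that any two choices of $h_0$ differ by an element of $C_{\bG^*}(s_0)^{F^*}$ and hence give the same conjugation action on unipotent characters, all match the paper.

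There is, however, a genuine gap in your verification of properties (4) and (5). The paper sets up the proof of the key identity as an \emph{induction on the semisimple rank of} $\bG$, with base case $\bG = \bT$, and this is not decorative: when you check that the diagram in property (4) commutes for $\tilde J_{s^{-1}}$, the top row is your candidate map $\tilde J_{s^{-1}}$ but the \emph{bottom} row must be the genuine Jordan decomposition $J_{s^{-1}}$ for the proper Levi $\bL$. To make the square commute, you need to know $J_{s^{-1}}(\bar\xi) = \overline{J_s(\xi)}$ for $\xi \in \cE(\bL^{\dot w F}, s)$ — that is, exactly the statement you are trying to establish, but for a group of strictly smaller semisimple rank. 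Your remark that ``Lusztig induction commutes with complex conjugation by Lemma \ref{DLbar}'' handles only the vertical arrows; it says nothing about the horizontal arrow along the bottom. The same issue recurs in property (5). Without the induction hypothesis the argument is circular; with it, the step becomes immediate, which is why the paper frames the whole proof of the claim as an induction. Your sketch should be repaired by inserting this induction explicitly (and noting the torus base case). Your treatment of property (2) is lighter than the paper's, which cites \cite[Proof of Corollary 3.9]{Lu78} for the needed relation between $\alpha\alpha'$ and powers of $q^d$, but you correctly flag this as the technical point to nail down, so that is a matter of filling in a citation rather than a gap in the logic.
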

\begin{proof}   Since we apply Theorem \ref{UniqueJord}, we translate the statement into that notation.  If $s \in \bT^{*}$ is such that $W_F(s)$ is nonempty, suppose $\chi \in \cE(\bG^F, s)$, and let $J_s(\chi) = \psi \in \cE(C_{\bG^*}(s)^{(\dot{w}_1 F)^*}, 1)$.  Then we must show that $\chi = \bar{\chi}$ if and only if there is some $h \in \bG^{*(\dot{w}_1 F)^*}$ such that $h s h^{-1} = s^{-1}$ and ${^h \psi} = \bar{\psi}$.

We first prove that if $J_s(\chi) = \psi$, then $J_{s^{-1}}(\bar{\chi}) = \bar{\psi}$.   By Lemma \ref{Ebar}, the claim makes sense.  We prove the claim by induction on the semisimple rank of $\bG$.  For the base case, we may assume that $\bG = \bT$ is a torus, in which case each Lusztig series is a singleton, and for any irreducible of $\bG^F$, $J_s(\chi) = {\bf 1}$, the trivial character.  So $J_{s^{-1}}(\bar{\chi}) = {\bf 1}$ as well.  

We now assume the claim holds for any group with smaller semisimple rank than $(\bG, F)$, and we prove the statement holds for $(\bG, F)$ (and any other group in the same isogeny class, so with the same semisimple rank).  We note that for every $s \in \bT^*$ with $W_F(s)$ nonempty, the map $\mu_{s^{-1}}(\bar{\chi}) = \bar{\psi}$ is well-defined since every element of $\cE(\bG, s^{-1})$ is of the form $\bar{\chi}$ for some $\chi \in \cE(\bG^F, s)$.  We show that this map satisfies the properties listed in Theorem \ref{UniqueJord}, and then by uniqueness we must have $\mu_{s^{-1}} = J_{s^{-1}}$.  We show this map satisfies the properties one by one, and we only need the induction hypothesis for some of the parts.

For property (1), by Lemma \ref{DLbar}, and the fact that $\langle \chi, R_{\bT^{wF}}^{\bG^F}(\hat{s}) \rangle$ is an integer, we have
$$\langle \chi, R_{\bT^{wF}}^{\bG^F}(\hat{s}) \rangle = \overline{ \langle \chi, R_{\bT^{wF}}^{\bG^F}(\hat{s}) \rangle } =  \langle \bar{\chi}, \overline{R_{\bT^{wF}}^{\bG^F}(\hat{s})} \rangle = \langle \bar{\chi}, R_{\bT^{wF}}^{\bG^F}(\widehat{s^{-1}}) \rangle,$$
and similarly,
\begin{align*}
\langle \psi, (-1)^{\ell(w_1)} R_{\bT^{*(wF)^*}}^{C_{\bG^*}(s)^{(\dot{w}_1 F)^*}} ({\bf 1}) \rangle & =  \overline{\langle \psi, (-1)^{\ell(w_1)} R_{\bT^{*(wF)^*}}^{C_{\bG^*}(s)^{(\dot{w}_1 F)^*}} ({\bf 1}) \rangle} \\
&  = \langle \bar{\psi}, (-1)^{\ell(w_1)} R_{\bT^{*(wF)^*}}^{C_{\bG^*}(s)^{(\dot{w}_1 F)^*}} ({\bf 1}) \rangle.
\end{align*}
Since we have 
$$\langle \chi, R_{\bT^{wF}}^{\bG^F}(\hat{s}) \rangle = \langle \psi, (-1)^{\ell(w_1)} R_{\bT^{*(wF)^*}}^{C_{\bG^*}(s)^{(\dot{w}_1 F)^*}} ({\bf 1}) \rangle,$$
then it follows we have
$$\langle \bar{\chi}, R_{\bT^{wF}}^{\bG^F}(\widehat{s^{-1}}) \rangle = \langle \bar{\psi}, (-1)^{\ell(w_1)} R_{\bT^{*(wF)^*}}^{C_{\bG^*}(s)^{(\dot{w}_1 F)^*}} ({\bf 1}) \rangle,$$
so that $\mu_{s^{-1}}$ satisfies (1) of Theorem \ref{UniqueJord}.

Property (2) of Theorem \ref{UniqueJord} concerns only the case $s=1$, and the bijection $J_1: \cE(\bG^F, 1) \rightarrow \cE(\bG^{*F^*}, 1)$ between unipotent characters of $\bG^F$ and $\bG^{*F^*}$.  Suppose $J_1(\chi) = \psi$, and that $\alpha, \alpha'$ are the eigenvalues of $F^{d}$ associated with $\chi$ and $\bar{\chi}$, and $\beta,\beta'$ are the eigenvalues of $F^{*d}$ associated with $\psi$ and $\bar{\psi}$, respectively.  By property (2a) of Theorem \ref{UniqueJord}, we know that $\alpha \beta^{-1}$ is a power of $q^{d/2}$.  By \cite[Proof of Corollary 3.9]{Lu78}, we know $\alpha \alpha'$ and $\beta \beta'$ are both powers of $q^{d}$.  Thus $\alpha'$ and $\beta'$ are equal up to a power of $q^{d/2}$.  Thus property (2a) of Theorem \ref{UniqueJord} holds for the map $\mu_1$.  For property (2b), we assume $\chi$ is in the principal series, meaning that it is a constituent of $\Ind_{\bB^F}^{\bG^F}(\mathbf{1})$, and so $\psi$ is also in the principal series, and they correspond to the same character $\zeta$, of the Hecke algebra $\mathcal{H}(\bG^F, \bB^F)$ (which is identified with $\mathcal{H}(\bG^{*F^*}, \bB^{*F^*})$ through the natural isomorphism $\delta: W \rightarrow W^*$ of Weyl groups).  If $\chi$ (and $\psi$) is in the principal series, then so is $\bar{\chi}$ (and $\bar{\psi}$).  Using \cite[Theorem 11.25(ii)]{CuRe81}, for example, we see that if $\chi$ corresponds to the character $\zeta$ of $\mathcal{H}(\bG^F, \bB^F)$, then $\bar{\chi}$ corresponds to $\bar{\zeta}$.  Thus if $\chi$ and $\psi$ correspond to the same character $\zeta$, then $\bar{\chi}$ and $\bar{\psi}$ both correspond to $\bar{\zeta}$, where $\mu_1(\bar{\chi}) = \bar{\psi}$.  So Property (2b) holds for $\mu_1$ as well.

For property (3), given any $z \in Z(\bG^{*F^*})$, we must show $\mu_{s^{-1}}(\bar{\chi}) = \mu_{s^{-1} z}( \bar{\chi} \otimes \hat{z})$.  If $J_s(\chi) = \psi$, then $J_{sz^{-1}} (\chi \otimes \widehat{z^{-1}}) = \psi$, and $\mu_{s^{-1}}(\bar{\chi}) = \bar{\psi}$.  We then have
$$ \mu_{s^{-1} z}(\bar{\chi} \otimes \hat{z}) = \overline{J_{sz^{-1}}( \chi \otimes \widehat{z^{-1}})} = \bar{\psi},$$
as required.

For property (4), since $\bL$ has smaller semisimple rank than $\bG$, we may use the induction hypothesis.  That is, for any $\xi \in \cE(\bL^{\dot{w}F}, s)$, if $J_s(\xi) = \psi$, then $J_{s^{-1}}(\bar{\xi}) = \bar{\psi}$.  Also, if $R_{\bL^{\dot{w}F}}^{\bG^F}(\xi) = \chi$, then $\overline{R_{\bL^{\dot{w}F}}^{\bG^F}(\xi)} = \bar{\chi}$.  It follows that the diagram in property (4) of Theorem \ref{UniqueJord} commutes when we replace $J_s$ with $\mu_{s^{-1}}$ in the top row and $J_s$ with $J_{s^{-1}}$ in the bottom row, as desired.

The proof that property (5) is satisfied by $\mu_{s^{-1}}$ is similar to (4).  Since $\bL$ has smaller semisimple rank than $\bG$, then by the induction hypothesis if $\xi \in \cE(\bL^{\dot{w}_2 F}, s)^{\bullet}$ and $J_s(\xi) = \lambda \in \cE(\bL^{*(\dot{w}_2F)^*}, 1)^{\bullet}$, then $J_{s^{-1}}(\bar{\xi}) = \bar{\lambda}$.  Also, if 
$$ R_{\bL^{*(\dot{w}_2 F)^*}}^{C_{\bG^*}(s)^{(\dot{w}_1 F)^*}}(\lambda) = \psi, \quad \text{ then } \quad R_{\bL^{*(\dot{w}_2 F)^*}}^{C_{\bG^*}(s)^{(\dot{w}_1 F)^*}}(\bar{\lambda}) = \bar{\psi}.$$
 If $R_{\bL^{\dot{w}_2 F}}^{\bG^F}(\xi) = \chi$, then $R_{\bL^{\dot{w}_2 F}}^{\bG^F}(\bar{\xi}) = \bar{\chi}$.  Since the diagram from property (5) of Theorem \ref{UniqueJord} commutes for $J_s$, we have $J_s(\chi) = \psi$, so $\mu_{s^{-1}}(\bar{\chi}) = \bar{\psi}$.  It follows that the diagram commutes using $J_{s^{-1}}$ on the bottom row and $\mu_{s^{-1}}$ on the top row, so that property (5) holds for $\mu_{s^{-1}}$.

For property (6), let $\varphi: (\bG, F) \rightarrow (\bG_1, F_1)$ be an epimorphism with kernel a central torus, i.e. an isogeny.  Then $\varphi^*: (\bG_1^*, F_1^*) \rightarrow (\bG^*, F^*)$ is injective.  Suppose $\chi_1 \in \cE(\bG_1^{F_1}, s_1)$, ${^t \varphi}(\chi_1) = \chi \in \cE(\bG^F, s)$, with $J_s(\chi) = \psi$ and $J_{s_1}(\chi_1) = \psi_1$.  Since property (6) holds for $J_s$ and $J_{s_1}$, then we have ${^t \varphi^*}(\psi) = \psi_1$.  For any $x$, we have 
$$ \psi_1(x^{-1}) = ({^t \varphi^*}(\psi))(x^{-1}) =  \psi(\varphi^*(x)^{-1}) = \overline{\psi(\varphi^*(x))}.$$
Thus, ${^t \varphi^*}(\bar{\psi}) = \bar{\psi}_1$.  We also have $\varphi(s^{-1}) = s_1^{-1}$, and ${^t \varphi}(\bar{\chi}_1) = \bar{\chi}$.  Now, the diagram from property (6) is commutative when we have $\mu_{s_1^{-1}}$ in the bottom row and $\mu_{s^{-1}}$ in the top row, as desired.

Finally, property (7) follows quickly, since if $\chi = \prod_i \chi_i$, and $J_s(\chi) = \psi = \prod_i \psi_i$, where $s = \prod_i s_i$ and $J_{s_i}(\chi_i) = \psi_i$, then 
$$\mu_{\prod_i s_i^{-1}} (\bar{\chi}) = \mu_{s^{-1}}(\bar{\chi}) = \bar{\psi} = \prod_i \bar{\psi_i}  = \prod_i \mu_{s_i^{-1}}(\bar{\chi_i}).$$
We now have the claim that if $J_s(\chi) = \psi$, then $J_{s^{-1}}(\bar{\chi}) = \bar{\psi}$.

Suppose $s \in \bT^*$ such that $W_F(s)$ is nonempty.  Using the notation of Lemma \ref{NormFrob}, suppose $h \in \bG^{*(\dot{w}_1F)^*}$ normalizes $C_{\bG^*}(s)^{(\dot{w}_1 F)^*}$.  By \cite[(1.27)]{BrMaMi93} it follows that  the automorphism given by conjugation by $h$ on $C_{\bG^*}(s)^{(\dot{w}_1 F)^*}$ permutes the set of unipotent characters $\cE(C_{\bG^*}(s)^{(\dot{w}_1 F)^*}, 1)$.  If $h s h^{-1} = s^{-1}$ for $h \in \bG^{*(\dot{w}_1F)^*}$, then $h$ normalizes $C_{\bG^*}(s)^{(\dot{w}_1 F)^*}$, and any other element $h_1 \in \bG^{*(\dot{w}_1F)^*}$ with the property $h_1 s h_1^{-1} = s^{-1}$ satisfies $h_1 \in h C_{\bG^{*(\dot{w}_1F)^*}}(s)$.  In particular, this means the conjugation action of $h$ on $\cE(C_{\bG^*}(s)^{(\dot{w}_1 F)^*}, 1)$ is independent of the choice of $h$ with the property $h s h^{-1} = s^{-1}$.

Suppose $\chi$ is real-valued, with $J_s(\chi) = \psi$.  Then $\chi = \bar{\chi}$, and from Lemma \ref{Ebar} it follows that $\cE(\bG^F, s) = \cE(\bG^F, s^{-1})$.  Then $hsh^{-1} = s^{-1}$ for some $h \in \bG^{*(\dot{w}_1 F)^*}$.  Since $s, s^{-1} \in \bT^{*(w_1 F)^*}$, then by Lemma \ref{NormFrob}, there is some $\dot{v} \in \Nr_{\bG^*}(\bT^*)^{(\dot{w}_1 F)^*}$ such that $\dot{v} s \dot{v}^{-1} = s^{-1}$.  By Lemma \ref{ConjJ}, we have $J_{^{\dot{v}} s}(\chi) =  {^{\dot{v}} \psi}$.  By the claim proved above, we also have $J_{s^{-1}}(\chi) = \bar{\psi}$.  Since ${^{\dot{v}} s} = s^{-1}$, then we have ${^{\dot{v}} \psi} = \bar{\psi}$, and then also ${^h \psi} = \bar{\psi}$.

Conversely, suppose $\chi \in \cE(\bG^F, s)$, where $hsh^{-1} = s^{-1}$ for some $h \in \bG^{*(\dot{w}_1 F)^*}$, $J_s(\chi) = \psi$ and ${^h \psi} = \bar{\psi}$.  Again by Lemma \ref{NormFrob}, we have ${^{\dot{v}} s} = s^{-1}$ for some $\dot{v} \in \Nr_{\bG^*}(\bT^*)^{(\dot{w}_1 F)^*}$, and ${^{\dot{v}} \psi} = \bar{\psi}$.  We then have $J_{s^{-1}}(\bar{\chi}) = \bar{\psi}$ as proved above, and we have $J_{^{\dot{v}} s}(\chi) = {^{\dot{v}} \psi}$ by Lemma \ref{ConjJ}.  Since $\bar{\psi} = {^{\dot{v}} \psi}$ and $s^{-1} = {^{\dot{v}} s}$, we have $J_{s^{-1}}(\bar{\chi}) = J_{s^{-1}}(\chi)$, and so $\chi = \bar{\chi}$ since $J_{s^{-1}}$ is a bijection.  Thus $\chi$ is real-valued.
\end{proof}

\noindent {\bf Remarks. }  It has been pointed out to the authors by P.H. Tiep that Theorem \ref{MainThm} does not hold if the assumption is dropped that the center of $\bG$ is connected.  It fails in the case that $\bG^F = \mathrm{SL}(2, \FF_q)$ for those characters in the Lusztig series corresponding to $s \in \bG^*$ with $C_{\bG^*}(s)$ disconnected.  However, it is possible that the result may hold with the milder assumption that $C_{\bG^*}(s)$ is connected.  This statement for the semisimple characters of $\bG^F$ is implied by a lemma of Navarro and Tiep \cite[Lemma 9.1]{NaTi08}.

\end{document}